\theoremstyle{plain}
\numberwithin{equation}{section}
\newtheorem{thm}{Theorem}[section]
\newtheorem{theorem}[thm]{Theorem}
\begin{document}
\fancyhead{}
\renewcommand{\headrulewidth}{0pt}
\fancyfoot{}
\fancyfoot[LE,RO]{\medskip \thepage}
\fancyfoot[LO]{\medskip MONTH YEAR}

\setcounter{page}{1}

\title [ an explicit  formula for the prime counting function]{  an explicit  formula for the prime counting function}
\author{konstantinos gaitanas}
\address{Department of Applied Mathematical and Physical Sciences\\
                National Technical University of Athens\\
                Greece}
\email{raffako@hotmail.com}

\begin{abstract}
This paper studies the behaviour of the prime counting function at some certain points.\\
We show that there is an exact formula for $\pi(n)$ which is valid  for infinitely many naturals numbers $n$.
\end{abstract}

\maketitle

\section{Introduction}
The prime counting function is at the center of mathematical research for centuries and many asymptotic distributions of $\pi(n)$ are well known.\\
Many formulas have been discovered by mathematicians \cite{Hardy} but almost all of them are using all the prime numbers not greater than  $n$ to calculate $\pi(n)$.
In this paper we  give an exact formula which holds  when a standard condition is satisfied.

\section{Some basic theorems}
\begin{theorem}\label{A}  Let $\pi(n)$ be the number of primes not greater than $n$ and $n\geq1$. \\
  Then $\frac{n}{\pi(n)}$ takes on every integer value greater than $1$.

\end{theorem}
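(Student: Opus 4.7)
\emph{Proof plan.} The plan is to convert the equation $n/\pi(n)=k$ into a discrete intermediate-value statement. Any solution must satisfy $k\mid n$, so write $n=kj$ with $j\geq 1$; the equation then becomes $\pi(kj)=j$. Thus for each fixed integer $k\geq 2$ it suffices to exhibit one $j\geq 1$ with $\pi(kj)=j$.

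I would introduce the integer-valued sequence
\[
g(j):=\pi(kj)-j
\]
and record three properties. (i) $g(1)=\pi(k)-1\geq 0$, since $k\geq 2$ forces $\pi(k)\geq 1$. (ii) $g(j)\to-\infty$: by the prime number theorem (or even the weaker Chebyshev-type bound $\pi(N)=O(N/\log N)$) one has $\pi(kj)/j\to 0$, so $g$ is eventually negative. (iii) The decrement of $g$ is at most one:
\[
g(j+1)-g(j)=\#\bigl\{p\text{ prime}:kj<p\leq k(j+1)\bigr\}-1\geq -1,
\]
because the cardinality on the right is nonnegative.

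These three facts close the argument. Let $j_{0}$ be the least index with $g(j_{0})<0$; by (i) and (ii) it exists and satisfies $j_{0}\geq 2$. Then $g(j_{0}-1)\geq 0$, while (iii) gives $g(j_{0}-1)\leq g(j_{0})+1\leq 0$, so $g(j_{0}-1)=0$. Hence $n:=k(j_{0}-1)$ satisfies $n/\pi(n)=k$. I do not anticipate a serious obstacle: the only input beyond elementary counting is the density-zero property of the primes, and the real content is observation (iii) --- stepping $j$ by one changes $n$ by $k$, and the monotonicity of $\pi$ ensures $g$ cannot drop by more than $1$ in a step, hence $g$ cannot skip the value $0$ on its way from nonnegative to negative.
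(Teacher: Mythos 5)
Your proof is correct and is essentially the argument of Golomb that the paper cites rather than reproduces: a discrete intermediate-value argument in which $\pi(N)=o(N)$ forces the eventual sign change of $g$ and the slow growth of $\pi$ prevents $g$ from skipping the value $0$. The only cosmetic difference is that you step $n$ in increments of $k$ (so monotonicity of $\pi$ suffices for your bound $g(j+1)-g(j)\geq -1$) rather than in increments of $1$ using $\pi(N+1)-\pi(N)\in\{0,1\}$, which changes nothing essential.
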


\begin{proof}
 The proof  is presented in \cite{Golomb}. It uses only the fact that $\pi(N) = o(N)$ and $\pi(N+1)-\pi(N)$ is $0$ or $1$.  \\
We can conclude from this theorem that $\frac{n}{\pi(n)}$ is an integer infinitelly often and we will use this fact in order to prove the existence of our formula.
\end{proof}

\begin{theorem}\label{A} $\frac{n}{lnn-\frac{1}{2}}<\pi(n)<\frac{n}{lnn-\frac{3}{2}}$ for every $n\geq 67$.

\end{theorem}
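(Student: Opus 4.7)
The plan is to establish these bounds via the classical Rosser--Schoenfeld approach, combining an effective form of the Prime Number Theorem for the Chebyshev function $\theta(n) = \sum_{p \leq n} \ln p$ with Abel's summation formula, and then closing the remaining small range by direct verification.

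First I would record the partial summation identity
\[
\pi(n) = \frac{\theta(n)}{\ln n} + \int_2^n \frac{\theta(t)}{t \ln^2 t}\, dt,
\]
and invoke an explicit estimate of the form $\theta(t) = t + R(t)$ with $|R(t)|$ small (say, $|R(t)| \leq \varepsilon\, t/\ln^k t$ for suitable constants, of the type supplied by effective bounds coming from zero-free regions of $\zeta$). Repeated integration by parts gives
\[
\int_2^n \frac{dt}{\ln^2 t} = \frac{n}{\ln^2 n} + \frac{2n}{\ln^3 n} + \frac{6n}{\ln^4 n} + \cdots,
\]
so that, after substituting back, one obtains the expansion
\[
\pi(n) = \frac{n}{\ln n}\left( 1 + \frac{1}{\ln n} + \frac{2}{\ln^2 n} + O\!\left(\frac{1}{\ln^3 n}\right) \right).
\]

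On the other hand, a geometric expansion yields, for any constant $c$,
\[
\frac{n}{\ln n - c} = \frac{n}{\ln n}\left( 1 + \frac{c}{\ln n} + \frac{c^2}{\ln^2 n} + \cdots \right).
\]
Comparing the second-order coefficients, since $\tfrac12 < 1 < \tfrac32$, the choice $c = \tfrac12$ gives a strict lower bound and $c = \tfrac32$ a strict upper bound for $\pi(n)$, valid for all $n$ larger than some threshold $N_0$ dictated by the explicit error terms carried through the argument.

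The main obstacle is twofold. On the analytic side, one must track fully effective constants in $\theta(t) = t + R(t)$ throughout the partial summation, rather than using only $O$-type estimates; this is what makes the proof quantitative and fixes $N_0$ to a concrete value. On the computational side, the threshold $n \geq 67$ in the statement is very tight, so once the analytic argument secures the bounds for $n \geq N_0$, one has to verify both inequalities by direct computation of $\pi(n)$ for every integer in the window $67 \leq n \leq N_0$. It is this combination of a sharp analytic estimate and an essentially unavoidable finite check that pins the starting value down to exactly $67$.
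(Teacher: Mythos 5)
The paper offers no proof of this theorem at all: it simply cites Rosser and Schoenfeld (1962), where the two inequalities appear as corollaries of their explicit estimates for $\pi(x)$ and $\theta(x)$. Your outline is an accurate description of how that cited proof actually proceeds --- partial summation of $\pi$ against $\theta$, an explicit error term $\theta(t)=t+R(t)$ obtained from a zero-free region of $\zeta$ together with verified computations of its low-lying zeros, comparison of the coefficient of $1/\ln n$ (which is $1$ in the expansion of $\pi(n)\ln n/n$ versus $c$ for $n/(\ln n-c)$, so $c=\tfrac{1}{2}$ undershoots and $c=\tfrac{3}{2}$ overshoots), and a finite computation to push the threshold down to $67$. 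So in substance you and the paper are relying on exactly the same argument; yours is simply the expanded version of the paper's one-line citation. The only caveat is that your sketch, like the paper's citation, defers all of the genuine difficulty --- the fully explicit numerical form of $R(t)$ and the resulting concrete value of $N_0$ --- to the reference, so as written it establishes the inequalities only for $n$ beyond an unspecified threshold rather than for all $n\ge 67$; turning it into a proof would require importing the Rosser--Schoenfeld constants and actually performing the finite check.
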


\begin{proof}
This is a theorem proved by J. Barkley Rosser and Lowell Schoenfeld and the proof is presented at \cite{Rosser}.\\
\end{proof}

\section{The formula for $\pi(n)$}
\begin{theorem}\label{A} For infinitely many natural numbers $n$ the following formula is valid:\\
 \begin{center}
 $\pi(n)=\frac{n}{\Bigl\lfloor {{lnn-\frac{1}{2}}\Bigr\rfloor}}$
\end{center}
\end{theorem}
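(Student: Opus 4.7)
The plan is to combine the two theorems from Section~2 directly: the Rosser--Schoenfeld bounds pin $n/\pi(n)$ inside an open interval of length exactly $1$, and Theorem~2.1 guarantees this ratio is an integer infinitely often. When both happen, the integer must be precisely $\lfloor \ln n - \tfrac12 \rfloor$.

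First I would rewrite Theorem~2.2 in the form
\begin{equation*}
\ln n - \tfrac{3}{2} \;<\; \frac{n}{\pi(n)} \;<\; \ln n - \tfrac{1}{2}, \qquad n \geq 67.
\end{equation*}
Now suppose $n \geq 67$ is such that $n/\pi(n)$ is an integer, call it $k$. Then $k \in (\ln n - \tfrac32, \ln n - \tfrac12)$, which rearranges to $k + \tfrac12 < \ln n < k + \tfrac32$, equivalently $\ln n - \tfrac12 \in (k, k+1)$. Hence $\lfloor \ln n - \tfrac12 \rfloor = k = n/\pi(n)$, so
\begin{equation*}
\pi(n) \;=\; \frac{n}{\lfloor \ln n - \tfrac12 \rfloor}.
\end{equation*}

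To finish, I would invoke Theorem~2.1: the ratio $n/\pi(n)$ attains every integer $>1$, in particular arbitrarily large integer values, so there are infinitely many $n$ for which $n/\pi(n) \in \Z$. Only finitely many of these can satisfy $n < 67$, so infinitely many satisfy $n \geq 67$ and the derivation above applies.

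The only point requiring a moment's care is the strictness of the bounds in Theorem~2.2: because the inequalities are strict, the endpoints $\ln n - \tfrac32$ and $\ln n - \tfrac12$ cannot themselves be integers equal to $k$, so there is no borderline case where $\ln n - \tfrac12$ lands exactly on an integer and the floor function becomes ambiguous. Beyond this, the argument is essentially a one-line manipulation, so I do not anticipate any serious obstacle.
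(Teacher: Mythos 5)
Your proof is correct and takes essentially the same route as the paper: invert the Rosser--Schoenfeld bounds to trap $n/\pi(n)$ in an open interval of length one, note that an integer in that interval must equal $\bigl\lfloor \ln n - \tfrac{1}{2}\bigr\rfloor$, and invoke Golomb's theorem to get infinitely many qualifying $n$. Your explicit check that the strict inequalities rule out a borderline value of the floor is slightly more careful than the paper's wording, but the argument is identical.
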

\begin{proof}
We will make use of the above mentioned inequality in order to prove our formula. \\   
\\
We have $\frac{n}{lnn-\frac{1}{2}}<\pi(n)<\frac{n}{lnn-\frac{3}{2}}$ for every $n\geq 67$.\\
  Inverse the inequality
 and multiply by $n$. We can see  that the inequality now has the form:\\
   \begin{center}
$lnn-\frac{3}{2}<\frac{n}{\pi(n)}<lnn-\frac{1}{2}$ .
\end{center}
So $\frac{n}{\pi(n)}$ lies between two real numbers $a-1$ and $a$ with $a=lnn-\frac{1}{2}$.\\
\\
This means that for every $n\geq 67$ when $ \frac{n}{\pi(n)}$ is an integer we must have:\\ 
\begin{center}
$\frac{n}{\pi(n)}=\Bigl\lfloor {lnn-\frac{1}{2}}\Bigr\rfloor \Leftrightarrow \pi(n)=\frac{n}{\Bigl\lfloor {lnn-\frac{1}{2}}\Bigr\rfloor}$.
\end{center}
This completes the proof.
\end{proof}

We can see below at Table 1 that the formula $\Bigl\lfloor  \frac{n}{lnn-\frac{1}{2}}\Bigr\rfloor$ gives exactly the value of $\pi(n)$ for every natural number $67\leq n<4000$  with $\frac{n}{\pi(n)}$ being an integer\cite{sloane}.\\

\begin{center}
\begin{tabular}{|c|c|c|}
\hline $n$ & $\pi(n)$ & $\Bigl\lfloor  \frac{n}{lnn-\frac{1}{2}}\Bigr\rfloor$  \\ \hline \hline $96$ & $24$
& $24$\\ \hline
$100$ & $25$ & $25$ 
 \\ \hline $120$ &
$30$ & $30$ 
 \\ \hline
$330$ & $66$ & $66$
 \\ \hline  $335$&$67$&$67$\\ \hline  $340$&$68$&$68$\\ \hline $350$&$70$&$70$\\ \hline $355$&$71$&$71$\\ \hline $360$&$72$&$72$\\ \hline $1008$&$168$&$168$\\  \hline$1080$&$180$&$180$\\  \hline$1092$&$182$&$182$\\ \hline $1116$&$186$&$186$\\ \hline $1122$&$187$&$187$\\ \hline $1128$&$188$&$188$\\  \hline$1134$&$189$&$189$\\ \hline $3059$&$437$&$437$\\ \hline $3066$&$438$&$438$\\ \hline $3073$&$439$&$439$\\  \hline$3080$&$440$&$440$\\ \hline $3087$&$441$&$441$\\  \hline$3094$&$442$&$442$\\ \hline
\end{tabular}
\end{center}

\begin{center}
TABLE 1
\end{center}

\medskip

\noindent MSC2010: 11A41, 11A25

\end{document}